\def\ps@pprintTitle{%
	\let\@oddhead\@empty
	\let\@evenhead\@empty
	\def\@oddfoot{\footnotesize\itshape
		{} \hfill\today}%
	\let\@evenfoot\@oddfoot
}
\newtheorem{theor}{Theorem}
\newtheorem{prop}[theor]{Proposition}
\newtheorem{cor}[theor]{Corollary}
\newtheorem{lemma}[theor]{Lemma}
\theoremstyle{definition} 
\newtheorem{defin}{Definition}
\newtheorem{rem}{Remark}
\newtheorem*{conv}{Convention}
\newtheorem{ex}[theor]{Example}
\DeclareMathOperator{\Sym}{Sym}
\DeclareMathOperator{\id}{id}
\DeclareMathOperator{\Aut}{Aut}
\DeclareMathOperator{\Ret}{Ret}
\begin{document}

\begin{frontmatter}
	\title{A characterization of finite simple set-theoretic solutions of the Yang-Baxter equation\tnoteref{mytitlenote}}
	\tnotetext[mytitlenote]{The author is a member of GNSAGA (INdAM).}
	\author[unile]{Marco~CASTELLI}
	\ead{marco.castelli@unisalento.it - marcolmc88@gmail.com}
	%

\begin{abstract}
In this paper we present a characterization of finite simple involutive non-degenerate set-theoretic solutions of the Yang-Baxter equation by means of left braces and we provide some significant examples. 
\end{abstract}
\begin{keyword}
\texttt{set-theoretic solution\sep Yang-Baxter equation\sep brace\sep skew brace}
\MSC[2020] 16T25\sep 81R50
\end{keyword}

\end{frontmatter}

\section*{Introduction}
The quantum Yang-Baxter equation first appeared in theoretical physics, in a paper by C.N. Yang \cite{yang1967}, and in statistical mechanics, in R.J. Baxter's work \cite{bax72}. To date, it is subject of many studies of very current interest in pure mathematics even beyond theoretical physics. In 1992 Drinfel'd \cite{drinfeld1992some} suggested the study of the set-theoretical version of this equation. Specifically, a \emph{set-theoretic solution of the Yang-Baxter equation} on a non-empty set $X$ is a pair $\left(X,r\right)$, where 
$r:X\times X\to X\times X$ is a map such that the relation
\begin{align*}
\left(r\times\id_X\right)
\left(\id_X\times r\right)
\left(r\times\id_X\right)
= 
\left(\id_X\times r\right)
\left(r\times\id_X\right)
\left(\id_X\times r\right)
\end{align*}
is satisfied.  
Writing a solution $(X,r)$ as $r\left(x,y\right) = \left(\lambda_x\left(y\right)\rho_y\left(x\right)\right)$, with
$\lambda_x, \rho_x$ maps from $X$ into itself, for every $x\in X$, we say that $(X, r)$ is \emph{non-degenerate} if $\lambda_x,\rho_x\in \Sym_X$, for every $x\in X$, and \emph{involutive} if $r^2=\id_{X\times X}$.\\
The papers of Gateva-Ivanova and Van Den Bergh  \cite{gateva1998semigroups} and Etingov, Schedler, and Soloviev \cite{etingof1998set} attracted several authors to the study of the involutive non-degenerate set-theoretic solutions (which we simply call \emph{solutions}). In particular, in \cite[Section 2]{etingof1998set} the class of \textit{indecomposable solutions} was introduced. The interest in these solutions is motivated by the fact that they allow to construct other solutions, not necessarily indecomposable, by suitable construction-tools, such as dynamical extension and retraction-process (see \cite{cacsp2018,etingof1998set,vendramin2016extensions} for more details). 
In this context, many theory has been developed and several interesting results have been obtained (see, for example, \cite{cacsp2018,castelli2021indecomposable,capiru2020,cedo2020primitive,JePiZa20x,jedlivcka2021cocyclic,rump2020,Ru20,smock} and related references).\\
Among indecomposable solutions, the \emph{simple} ones play a special role since they are the "fundamental blocks" to construct all the others by dynamical extensions (see \cite[Proposition 2]{cacsp2018} and \cite[Corollary 2.13]{vendramin2016extensions}). On the other hand, until a year ago, very little was known about simple solutions. In particular, there were no methods for building simple solutions and only a few number of simple solutions was known: the simple solutions having size $\leq 8$, obtained by inspection of the database of small solutions, and the indecomposable ones having prime size $p$ (where $\lambda_x=\rho^{-1}_x$ and all the $\lambda_x$ are all equal to the same $p$-cycle). In the last year, Ced\'o and Okni\'nski provided a construction-methods that allow to obtain other families of simple solutions \cite{cedo2021constructing,cedo2022new} for the first time. In particular, they exhibited simple solutions having size $n^2$ and $m n^2$, for $n,m>1$. For these constructions, the imprimitivity action of the permutation group associated to these solutions has a crucial importance (see \cite{cedo2020primitive} for more details). Following a different approach, in \cite{cedo2022new} they found a striking link between simple left braces and simple solutions, showing that a large family of simple left braces provides simple solutions \cite[Theorem 5.1]{cedo2022new}. Actually, a natural problem, recently posed by Okni\'nski during a talk in \cite{Okninski2022}, is the construction of different simple solutions.\\
The aim of this paper is to give a characterization of finite simple solutions by means of left braces, algebraic structures introduced by Rump in \cite{rump2007braces}. This naturally provides the precise class of left braces that give rise to simple solutions. The paper is organized as follows. In Section $1$ we introduce the necessary background on left braces and on solutions, using the language of cycle sets. In Section $2$ we show the main result, which give a characterization of simple solutions by means of left braces. In Section $3$ we apply our result to provide some examples of simple solutions, some of which are different from the ones obtained in \cite{cedo2021constructing,cedo2022new}.

\section{Basic results}

In this section, we mainly recall some basics on solutions, cycle sets and left braces that are useful throughout the paper. 

\subsection{Solutions of the Yang-Baxter equation and cycle sets}
In \cite{rump2005decomposition}, Rump found a one-to-one correspondence between solutions and an algebraic structure with a single binary operation, which he called \emph{non-degenerate cycle sets}.
To illustrate this correspondence, let us firstly recall the following definition.

\begin{defin}[pag. 45, \cite{rump2005decomposition}]
A pair $(X,\cdot)$ is said to be a \emph{cycle set} if each left multiplication $\sigma_x:X\longrightarrow X,$ $y\mapsto x\cdot y$ is bijective and 
$$(x\cdot y)\cdot (x\cdot z)=(y\cdot x)\cdot (y\cdot z), $$
for all $x,y,z\in X$.  Moreover, a cycle set $(X,\cdot)$ is called \textit{non-degenerate} if the squaring map $x\mapsto x\cdot x$ is bijective.
\end{defin}

\begin{conv}
Even if not specified, all the cycle sets are finite and non-degenerate throughout the paper.
\end{conv}

\begin{prop}[Propositions 1-2, \cite{rump2005decomposition}]\label{corrisp}
Let $(X,\cdot)$ be a cycle set. Then the pair $(X,r)$, where $r(x,y):=(\sigma_x^{-1}(y),\sigma_x^{-1}(y)\cdot x)$, for all $x,y\in X$, is a solution of the Yang-Baxter equation which we call the associated solution to $(X,\cdot)$. Moreover, this correspondence is one-to-one.
\end{prop}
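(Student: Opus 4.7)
The plan is, given a cycle set $(X,\cdot)$, to verify the three defining properties of an involutive non-degenerate set-theoretic solution for the map $r(x,y) = (\sigma_x^{-1}(y), \sigma_x^{-1}(y) \cdot x)$, and then to describe the inverse construction and check that the two assignments are mutually inverse.

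Setting $\lambda_x := \sigma_x^{-1}$ and $\rho_y(x) := \sigma_x^{-1}(y) \cdot x$, so that $r(x,y) = (\lambda_x(y), \rho_y(x))$, involutivity requires only the invertibility of each $\sigma_x$: writing $(x', y') := r(x,y)$, one has $x' = \sigma_x^{-1}(y)$ and $y' = \sigma_{x'}(x)$, so $\sigma_{x'}^{-1}(y') = x$ and $x \cdot x' = \sigma_x(\sigma_x^{-1}(y)) = y$, giving $r^2 = \id_{X \times X}$. For the Yang-Baxter equation I would first reformulate the cycle set axiom $(x\cdot y)\cdot(x\cdot z) = (y\cdot x)\cdot(y\cdot z)$ as the single operator identity $\sigma_{\sigma_x(y)}\, \sigma_x = \sigma_{\sigma_y(x)}\, \sigma_y$ on $\Sym_X$, equivalently $\lambda_x\, \lambda_{\lambda_x^{-1}(y)} = \lambda_y\, \lambda_{\lambda_y^{-1}(x)}$, and then verify coordinatewise that the two triple compositions in the YBE produce the same output by expanding each slot using this identity together with $r^2 = \id_{X\times X}$.

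For non-degeneracy, the maps $\lambda_x$ are permutations by the very definition of a cycle set, while each $\rho_y$ — rewritten as $\rho_y(x) = \sigma_{\lambda_x(y)}(x)$ — is shown to be a bijection using precisely the non-degeneracy assumption, namely the bijectivity of the squaring map $x \mapsto x\cdot x$. For the converse direction, given a solution $(X,r)$ with $r(x,y) = (\lambda_x(y), \rho_y(x))$, I would define $x \cdot y := \lambda_x^{-1}(y)$ and verify that (i) each $\sigma_x = \lambda_x^{-1}$ is bijective, (ii) the cycle set axiom follows by reversing the operator identity above starting from the YBE and $r^2 = \id_{X\times X}$, and (iii) bijectivity of the squaring map is equivalent, through the same correspondence, to bijectivity of the $\rho_y$. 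By construction the two assignments are mutually inverse, which gives the claimed one-to-one correspondence.

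The main obstacle is the bookkeeping in the Yang-Baxter step: the three coordinates of $(r\times\id_X)(\id_X\times r)(r\times\id_X)$ involve several nested applications of $\lambda$ and $\rho$, and matching them against those of the other composite requires the precise form of the reformulated cycle set identity. Extracting this identity via the substitution $(y, z) \mapsto (\sigma_x^{-1}(y), \sigma_x^{-1}(z))$ in the original axiom is the key algebraic manipulation, after which the coordinate-by-coordinate verification becomes routine.
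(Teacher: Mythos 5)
Your proposal is correct and follows the standard route: the paper itself gives no proof of this proposition, citing it directly from Rump's work, and your argument (involutivity from bijectivity of the $\sigma_x$, the reformulation of the cycle-set axiom as the operator identity $\sigma_{\sigma_x(y)}\sigma_x=\sigma_{\sigma_y(x)}\sigma_y$, equivalently $\lambda_x\lambda_{\lambda_x^{-1}(y)}=\lambda_y\lambda_{\lambda_y^{-1}(x)}$, and the link between bijectivity of the $\rho_y$ and of the squaring map) is exactly the argument underlying that cited result. The only caveat is that the coordinatewise verification of the Yang--Baxter equation, which you defer as ``routine,'' is the bulk of the work in a fully written-out proof, but your identification of the key identity is the right one and the sketch contains no gap in the ideas.
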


A first useful tool to construct new solutions, introduced in \cite{etingof1998set}, is the so-called \textit{retract relation}, an equivalence relation on $X$ which we denote by $\sim_r$. Precisely, if $(X,r)$ is a solution, then $x\sim_r y$ if and only if $\lambda_x=\lambda_y$, for all $x,y\in X$. In this way, $(X,r)$ induces another solution, having the quotient $X/\sim_r$ as underlying set, which is named \textit{retraction} of $(X,r)$ and is denoted by $\Ret(X, r)$. 
As one can expect, the retraction of a solution corresponds to the retraction of a non-degenerate cycle set. Specifically, in \cite{rump2005decomposition} Rump showed that the binary relation $\sim_\sigma$ on $X$ given by 
$$x\sim_\sigma y :\Longleftrightarrow \sigma_x = \sigma_y$$ 
for all $x,y\in X$, is a \emph{congruence} of $(X,\cdot)$, i.e. an equivalence relation for which $x\sim_\sigma y$ and $x'\sim_\sigma y'$ implies $x\cdotp x'\sim_\sigma y\cdotp y',$ for all $x,x',y,y'\in X$. Moreover, he proved that the quotient $X/\sim_{\sigma}$, which we denote by $\Ret(X)$, is a cycle set and he called it the \emph{retraction} of $(X,\cdot)$. As the name suggests, if $(X, \cdot)$ is the cycle set associated to a solution $(X,r)$, then the retraction $\Ret(X)$ is the cycle set associated to $\Ret(X,r)$. Besides, a cycle set $X$ is said to be \textit{irretractable} if $\Ret(X)=X$, otherwise it is called \textit{retractable}.

\smallskip



\noindent For a cycle set $X$, the permutation group generated by the set $\{\sigma_x \, | \, x\in X\}$ will be denoted by $\mathcal{G}(X)$ and we call it the \textit{associated permutation group}. Obviously, in terms of solutions, the associated permutation group is exactly the permutation group generated by the set $\{\lambda_x \, | \, x\in X\}$.
\medskip

\noindent Our attention is mainly posed on cycle sets that are indecomposable. 
\begin{defin}
A cycle set $(X,\cdot)$ is said to be \textit{indecomposable} if the permutation group $\mathcal{G}(X)$ acts transitively on $X$. 
\end{defin}

\begin{rem}
Note that a solution $(X,r)$ is indecomposable if and only if the associated cycle set $(X,\cdot)$ is indecomposable. Now, in the rest of the paper, we will study indecomposable solutions using the language of cycle sets. However, all the results involving cycle sets can be translated in terms of solutions by \cref{corrisp}.
\end{rem} 

\medskip

\noindent In a classical way one can define the notion of homomorphism between two cycle sets $X,Y$, i.e. a function $f$ from $X$ to $Y$ such that $f(x\cdotp y)=f(x)\cdotp f(y)$ for all $x,y\in X$. A surjective homomorphism is said to be \emph{epimorphism}, while a bijective homomorphism is said to be \emph{isomorphism}. It is easy to show that if $h$ is an epimorphism from a cycle set $Y$ to a cycle set $Z$, then the binary relation given by $x\sim_h y:\Longleftrightarrow h(x)=h(y)$ is a congruence of $(Y,\cdotp)$ and conversely every congruence of $Y$ give rise to an epimorphism of cycle sets. Now, we can give the key-notion of this paper.

\begin{defin}
A cycle set $X$ is said to be \emph{simple} if $|X|>1$ and for every epimorphism $f$ from $X$ to a cycle set $Y$ we have that $|Y|=1$ or $f$ is an isomorphism.
\end{defin}
We remark that even if this definition of simplicity, given in \cite{cedo2021constructing}, is different from the original one given by Vendramin in \cite{vendramin2016extensions}, by \cite[Lemma 1]{cacsp2018} both definitions coincide for finite indecomposable cycle sets. However, we have the following result.

\begin{prop}(\cite[Lemma 4.1 - Proposition 4.2]{cedo2021constructing})\label{indirr}
Let $X$ be a simple cycle set. Then, if $|X|>2$, $X$ is indecomposable. Moreover, if $|X|$ is not a prime number, $X$ is irretractable.
\end{prop}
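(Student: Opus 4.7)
The plan is to handle the two assertions separately, in each case exhibiting (or excluding) a non-trivial congruence and then invoking simplicity.

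For the indecomposability part, I would first check that the partition of $X$ into $\mathcal{G}(X)$-orbits is a congruence of $(X,\cdot)$. This is immediate from the fact that each $\sigma_x$ lies in $\mathcal{G}(X)$: given $x\sim y$ and $x'\sim y'$ in the same orbits, $x\cdot x'=\sigma_x(x')$ stays in the orbit of $x'$ and $y\cdot y'=\sigma_y(y')$ stays in the orbit of $y'$, so the two products lie in the same orbit. Let $\pi\colon X\to X/\mathcal{G}(X)$ be the resulting epimorphism. By simplicity either $|X/\mathcal{G}(X)|=1$, giving transitivity of $\mathcal{G}(X)$ and hence indecomposability of $X$, or $\pi$ is an isomorphism. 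In the latter case every orbit is a singleton, so $\sigma_x=\id_X$ for every $x\in X$ and the operation reduces to $x\cdot y=y$. But for this trivial cycle set \emph{every} equivalence relation on $X$ is automatically a congruence, and since $|X|>2$ one can choose a partition with one class of size $2$ and the remaining elements as singletons; this produces a proper non-trivial quotient, contradicting simplicity.

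For the irretractability part, I would suppose by contradiction that $|X|$ is not prime and that $X$ is retractable. The canonical map $X\to\Ret(X)$ is then a non-injective epimorphism, so by simplicity $|\Ret(X)|=1$; equivalently, there is a single $\sigma\in\Sym_X$ with $\sigma_x=\sigma$ for every $x\in X$. Since $|X|\geq 4>2$, the first part already forces $X$ to be indecomposable, so $\mathcal{G}(X)=\langle\sigma\rangle$ acts transitively on $X$ and hence $\sigma$ is a single $n$-cycle, where $n:=|X|$. Identifying $X$ with $\mathbb{Z}/n\mathbb{Z}$ so that $\sigma(x)=x+1$, every divisor $d$ of $n$ with $1<d<n$ produces the $\sigma$-invariant partition into cosets of $d\mathbb{Z}/n\mathbb{Z}$, which is thus a proper non-trivial congruence of $X$. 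Since $n$ is composite, such a $d$ exists, contradicting simplicity.

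The most delicate point I anticipate is the degenerate situation where $X$ collapses to the trivial cycle set $x\cdot y=y$: the hypothesis $|X|>2$ is needed there precisely because for $|X|=2$ that trivial cycle set is itself simple (yet decomposable), which is exactly what the threshold in the statement reflects.
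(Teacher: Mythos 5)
The paper does not prove this proposition; it imports it directly from C\'edo--Okni\'nski \cite[Lemma 4.1, Proposition 4.2]{cedo2021constructing}, so there is no internal proof to compare against. Your argument is a correct, self-contained derivation and follows the same strategy as the cited source: the orbit partition under $\mathcal{G}(X)$ is a congruence, so simplicity forces either transitivity or the collapse to the trivial cycle set $x\cdot y=y$, which for $|X|>2$ admits a proper non-trivial congruence; and for the second claim, retractability plus simplicity forces $|\Ret(X)|=1$, hence a single $n$-cycle $\sigma$, whose quotient modulo a proper divisor $d$ of the composite $n$ is a proper non-trivial congruence. All the individual steps check out (the orbit partition is indeed a congruence, finiteness guarantees the quotients are again non-degenerate cycle sets, and $|X|$ non-prime together with $|X|>1$ does give $|X|\geq 4$ so the first part is applicable), and your closing remark correctly identifies why the threshold $|X|>2$ is sharp.
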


\subsection{Solutions of the Yang-Baxter equation and left braces}
At first, we introduce the following definition that, as observed in \cite{cedo2014braces}, is equivalent to the original introduced by Rump in \cite{rump2007braces}.

\begin{defin}[\cite{cedo2014braces}, Definition 1]
A set $A$ endowed of two operations $+$ and $\circ$ is said to be a \textit{left brace} if $(A,+)$ is an abelian group, $(A,\circ)$ a group, and
$$
    a\circ (b + c) + a
    = a\circ b + a\circ c,
$$
for all $a,b,c\in A$.
\end{defin}

\begin{ex}\label{esbrace}
If $X$ is a cycle set, then one can show that the permutation group $\mathcal{G}(X)$ give rise to a left brace $(\mathcal{G}(X),+,\circ)$, where $\circ$ is the usual composition in $\mathcal{G}(X)$ (see, for example, \cite[Section 2]{bachiller2015family} for more details). From now on, we will refer to $(\mathcal{G}(X),+,\circ)$ as the \emph{associated left brace}.
\end{ex}

Given a left brace $A$ and $a\in A$, let us denote by $\lambda_a:A\longrightarrow A$ the map from $A$ into itself defined by
\begin{equation*}\label{eq:gamma}
    \lambda_a(b):= - a + a\circ b,
\end{equation*} 
for all $b\in A$. 
As shown in \cite[Proposition 2]{rump2007braces} and \cite[Lemma 1]{cedo2014braces}, these maps have special properties. We recall them in the following proposition.
\begin{prop}\label{action}
Let $A$ be a left brace. Then, the following are satisfied: 
\begin{itemize}
\item[1)] $\lambda_a\in\Aut(A,+)$, for every $a\in A$;
\item[2)] the map $\lambda:A\longrightarrow \Aut(A,+)$, $a\mapsto \lambda_a$ is a group homomorphism from $(A,\circ)$ into $\Aut(A,+)$.
\end{itemize}
\end{prop}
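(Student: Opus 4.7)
The plan is to derive both assertions directly from the single brace identity $a\circ(b+c)+a = a\circ b + a\circ c$, after one preliminary observation. First I would show that $0$, the neutral element of $(A,+)$, is also the identity of $(A,\circ)$: taking $b=c=0$ in the brace identity gives $a\circ 0 + a = a\circ 0 + a\circ 0$, whence $a = a\circ 0$ by cancellation in $(A,+)$. Since $(A,\circ)$ is a group this forces $0$ to be its two-sided identity, and consequently $\lambda_0(b) = -0 + 0\circ b = b$, so $\lambda_0 = \id_A$.

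For part 1, I would prove additivity of $\lambda_a$ by rewriting the brace identity as $a\circ(b+c) = a\circ b + a\circ c - a$ and computing $\lambda_a(b+c) = -a + a\circ(b+c) = -a + a\circ b + a\circ c - a = (-a + a\circ b) + (-a + a\circ c) = \lambda_a(b) + \lambda_a(c)$, where the commutativity of $(A,+)$ is used to rearrange the middle sum. Bijectivity of $\lambda_a$ will be deferred until part 2 is in hand.

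For part 2, I would verify $\lambda_{a\circ b} = \lambda_a \lambda_b$ on an arbitrary $c\in A$. By associativity of $\circ$, $\lambda_{a\circ b}(c) = -(a\circ b) + a\circ(b\circ c)$. Using the additivity from part 1 together with the brace identity, $\lambda_a(\lambda_b(c)) = -a + a\circ(-b + b\circ c) = -a + a\circ(-b) + a\circ(b\circ c) - a$. To reconcile the two expressions I would specialize the brace identity at $c = -b$ and combine with $a\circ 0 = a$ to obtain $a\circ(-b) = 2a - a\circ b$; substituting this into the previous line yields exactly $-(a\circ b) + a\circ(b\circ c)$, proving the homomorphism property. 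Finally, applying this property with $a$ replaced by its $\circ$-inverse gives $\lambda_{a^{-1}}\lambda_a = \lambda_0 = \id_A$, so $\lambda_a$ is bijective and therefore lies in $\Aut(A,+)$, completing part 1. I anticipate no serious obstacle; the whole argument is a careful bookkeeping exercise mixing the two operations through the single brace identity, the only mildly clever moment being the derivation of $a\circ(-b) = 2a - a\circ b$ needed to bridge the two sides of the homomorphism computation.
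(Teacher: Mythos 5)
Your argument is correct and complete: the identification of $0$ as the identity of $(A,\circ)$, the additivity computation, the identity $a\circ(-b)=2a-a\circ b$, and the verification that $\lambda_{a\circ b}=\lambda_a\lambda_b$ (which then yields bijectivity via $\lambda_{a^{-1}}\lambda_a=\lambda_a\lambda_{a^{-1}}=\lambda_0=\id_A$) all check out. The paper itself gives no proof, merely citing Proposition 2 of Rump's paper and Lemma 1 of C\'edo--Jespers--Okni\'nski, and your derivation is essentially the standard argument found there.
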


\noindent The map $\lambda$ is of crucial importance to construct cycle sets (and hence solutions of the Yang-Baxter equation) using left braces, as one can see in the following proposition.

\begin{prop}[Lemma 2, \cite{cedo2014braces}]
Let A be a left brace and $\cdotp$ the binary operation on $A$ map given by
$$
a\cdotp b:=\lambda_a^{-1}(b), $$
for all $a,b\in A$. Then, $(A,\cdotp)$ is a cycle set.
\end{prop}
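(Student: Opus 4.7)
My plan is to verify the two defining properties of a cycle set directly from the brace axioms, using the properties of the $\lambda$-map collected in Proposition \ref{action}.

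First, I would dispose of the bijectivity of each left multiplication. Since $\sigma_a(b) = a \cdot b = \lambda_a^{-1}(b)$ and Proposition \ref{action}(1) tells us $\lambda_a \in \Aut(A,+)$, the map $\sigma_a$ is a bijection of $A$ for every $a \in A$. This is essentially free.

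The substantive part is the cycle-set identity $(a \cdot b)\cdot(a\cdot c) = (b\cdot a)\cdot(b\cdot c)$. Rewriting both sides in terms of $\lambda$, what we need is
\[
\lambda_{a\cdot b}^{-1}\bigl(\lambda_a^{-1}(c)\bigr) = \lambda_{b\cdot a}^{-1}\bigl(\lambda_b^{-1}(c)\bigr)
\]
for all $c$, i.e.\ $\lambda_a \circ \lambda_{a\cdot b} = \lambda_b \circ \lambda_{b\cdot a}$ as elements of $\Aut(A,+)$. Proposition \ref{action}(2) says $\lambda$ is a group homomorphism from $(A,\circ)$ to $\Aut(A,+)$, so this will follow from the equality $a \circ (a\cdot b) = b \circ (b\cdot a)$ in the group $(A,\circ)$. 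The key computation is that from $\lambda_a(x) = -a + a\circ x$ one reads off $a\circ x = a + \lambda_a(x)$; specializing $x = a\cdot b = \lambda_a^{-1}(b)$ gives
\[
a\circ (a\cdot b) = a + \lambda_a\bigl(\lambda_a^{-1}(b)\bigr) = a + b.
\]
By symmetry $b\circ(b\cdot a) = b + a$, and these agree because $(A,+)$ is abelian.

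Putting it together, $\lambda_{a \circ (a\cdot b)} = \lambda_{b \circ (b\cdot a)}$, hence $\lambda_a \circ \lambda_{a\cdot b} = \lambda_b \circ \lambda_{b\cdot a}$, and inverting yields the required identity after evaluation at $c$. I do not foresee a real obstacle: everything is a two-line manipulation once one spots that the brace axiom is exactly the mechanism turning commutativity of $+$ into the cycle-set identity via the homomorphism $\lambda$. The only care needed is to track that $\lambda$ being a homomorphism $(A,\circ)\to \Aut(A,+)$ is used rather than requiring $\lambda$ to be injective, since we apply it to an equality that already holds in $(A,\circ)$.
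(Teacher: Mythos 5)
Your proof is correct and is essentially the standard argument for this recalled result (the paper itself only cites \cite{cedo2014braces} and gives no proof): bijectivity of $\sigma_a=\lambda_a^{-1}$ from \cref{action}(1), and the cycle-set identity reduced via the homomorphism property of $\lambda$ to the equality $a\circ\lambda_a^{-1}(b)=a+b$ and commutativity of $+$. All steps, including the order of composition in $\lambda_{a\cdot b}^{-1}\circ\lambda_a^{-1}=(\lambda_a\circ\lambda_{a\cdot b})^{-1}$, check out.
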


\noindent Therefore, the previous proposition and \cref{esbrace} show that if $X$ is a cycle set, then we can construct a cycle set on the permutation group $\mathcal{G}(X)$.\\
For the following definition, we refer the reader to \cite[pg. 160]{rump2007braces} and \cite[Definition 3]{cedo2014braces}.

\begin{defin}
Let $A$ be a left brace. A subset $I$ of $A$ is said to be a \textit{left ideal} if it is a subgroup of the multiplicative group and $\lambda_a(I)\subseteq I$, for every $a\in A$. Moreover, a left ideal is an \textit{ideal} if it is a normal subgroup of the multiplicative group.
\end{defin}
%
\noindent As one can expect, if $I$ is an ideal of a left brace $A$, then the structure $A/I$ is a left brace called the \emph{quotient left brace} of $A$ modulo $I$. Moreover, the ideal $\{0\}$ will be called the \emph{trivial} ideal and and a left brace $A$ which contains no ideals different from $\{0\}$ and $A$ will be called a \emph{simple} left brace.\\
In \cite{rump2007braces}, Rump introduced the special notion of the socle of a left brace that, in the terms of \cite[Section 4]{cedo2014braces}, is the following.
\begin{defin}
Let $A$ be a left brace. Then, the set 
$$
    Soc(A) := \{a\in A \ | \ \forall \,
     b\in A \quad  a + b = a\circ b \}
$$
is named \emph{socle} of $A$.
\end{defin}
\noindent Clearly,
$Soc(A) := \{a\in A \ | \ \lambda_a = \id_A\}$.
Moreover, we have that $Soc(A)$ is an ideal of $A$. If a cycle set $X$ is irretractable, then the socle of its associated left brace is trivial.

\begin{prop}(Lemma 2.1, \cite{bachiller2015family})\label{soczero}
Let $X$ be an irretractable cycle set. Then, $Soc(\mathcal{G}(X))=\{id_X\}$. 
\end{prop}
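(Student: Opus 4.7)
The strategy is to exploit the compatibility between the brace-theoretic $\lambda$-action on $\mathcal{G}(X)$ and the natural permutation action of $\mathcal{G}(X)$ on $X$, and then use the characterization of irretractability in terms of the equivalence $\sim_\sigma$.

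The first step is to recall (or establish) the key identity
\begin{equation*}
\lambda_\alpha(\sigma_x) \;=\; \sigma_{\alpha(x)}
\end{equation*}
for every $\alpha\in\mathcal{G}(X)$ and every $x\in X$. Since $\lambda:(\mathcal{G}(X),\circ)\to\Aut(\mathcal{G}(X),+)$ is a group homomorphism by \cref{action} and the generators of $\mathcal{G}(X)$ are the $\sigma_y$'s, it suffices to check the identity when $\alpha=\sigma_y$, in which case it reduces to $\lambda_{\sigma_y}(\sigma_x)=\sigma_{y\cdotp x}$. This relation is built into the very construction of the associated left brace on $\mathcal{G}(X)$ (cf.\ \cite{bachiller2015family}), so I would only quote it here rather than re-derive it.

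Next, I would take an arbitrary $\alpha \in Soc(\mathcal{G}(X))$, so that by definition $\lambda_\alpha = \id_{\mathcal{G}(X)}$. Applying the identity above, I get $\sigma_{\alpha(x)}=\lambda_\alpha(\sigma_x)=\sigma_x$ for every $x\in X$, i.e.\ $\alpha(x)\sim_\sigma x$ for every $x\in X$.

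Now I invoke the hypothesis that $X$ is irretractable. By definition this means $\Ret(X)=X$, i.e.\ the congruence $\sim_\sigma$ is trivial: $\sigma_x=\sigma_y$ forces $x=y$. Hence $\alpha(x)=x$ for every $x\in X$, and since $\mathcal{G}(X)\subseteq \Sym_X$ this forces $\alpha=\id_X$. Therefore $Soc(\mathcal{G}(X))=\{\id_X\}$, as wanted.

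The only potentially delicate point is the identity $\lambda_\alpha(\sigma_x)=\sigma_{\alpha(x)}$: it is the single ingredient that links the abstract brace-theoretic $\lambda$ with the concrete permutation action on $X$, and one must either cite it from the literature or verify it by direct computation using the explicit construction of the addition on $\mathcal{G}(X)$. Once this compatibility is in hand, the rest of the argument is a one-line translation of the socle condition into the triviality of $\sim_\sigma$ on $X$.
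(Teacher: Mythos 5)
The paper does not actually prove \cref{soczero}; it quotes the statement verbatim as Lemma 2.1 of \cite{bachiller2015family}, so there is no in-text argument to compare against. Your proof is correct and is essentially the standard argument behind that cited lemma: the compatibility identity linking the brace-theoretic $\lambda$-action on $\mathcal{G}(X)$ with the permutation action on $X$ does hold by construction of the associated left brace, with the only caveat that, depending on whether one indexes the cycle base by the $\lambda_x$ or by the $\sigma_x=\lambda_x^{-1}$ (the paper's convention), it may read $\lambda_\alpha(\sigma_x)=\sigma_{\alpha^{-1}(x)}$ rather than $\sigma_{\alpha(x)}$ --- a discrepancy that is immaterial here, since either form combined with $\lambda_\alpha=\id$ and the injectivity of $x\mapsto\sigma_x$ (which is exactly irretractability) forces $\alpha=\id_X$.
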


\noindent The socle is useful to establish a link between a left brace $A$ and the permutation group associated to the cycle set $(A,\cdotp)$.

\begin{prop}(Lemma 2.2, \cite{cedo2021constructing})\label{isosocle}
Let $A$ be a left brace and $\mathcal{G}(A)$ the permutation group associated to the cycle set $(A,\cdotp)$. Then, as left braces, $A/Soc(A)\cong \mathcal{G}(A)$. 
\end{prop}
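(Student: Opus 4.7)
The plan is to realize the isomorphism via the map $\pi : A \to \mathcal{G}(A)$, $a \mapsto \lambda_a$, and apply the first isomorphism theorem for left braces. Since in the cycle set $(A, \cdot)$ associated to $A$ one has $a \cdot b = \lambda_a^{-1}(b)$, the left multiplications are $\sigma_a = \lambda_a^{-1}$, so $\mathcal{G}(A)$ is generated by $\{\lambda_a^{-1} : a \in A\}$. By \cref{action}, $\lambda : (A, \circ) \to \Aut(A, +)$ is already a group homomorphism, so its image $\{\lambda_a : a \in A\}$ is a subgroup of $\Sym_A$ closed under inversion, and hence coincides with $\mathcal{G}(A)$. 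In particular, $\pi$ is a surjective multiplicative group homomorphism.

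Next, directly from the definition, $\ker \pi = \{a \in A : \lambda_a = \id_A\} = Soc(A)$. The first isomorphism theorem then produces a group isomorphism $\bar\pi : (A/Soc(A), \circ) \to (\mathcal{G}(A), \circ)$ sending the class of $a$ to $\lambda_a$.

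The remaining work is to show that $\bar\pi$ respects the additive structures. With the brace structure on $\mathcal{G}(A)$ from \cref{esbrace} (as constructed in \cite{bachiller2015family}), the addition is characterized by $\lambda_a + \lambda_b := \lambda_{a+b}$. To see this is well-defined, I would check that the multiplicative and additive cosets of $Soc(A)$ in $A$ coincide. On $Soc(A)$ itself the two operations agree, because $s \circ t = s + \lambda_s(t) = s + t$ whenever $\lambda_s = \id$. Moreover $\lambda_a(Soc(A)) \subseteq Soc(A)$ for every $a \in A$: given $s \in Soc(A)$, set $t := a \circ s \circ a^{\circ -1}$, which lies in $Soc(A)$ by the normality of $\ker \lambda$ in $(A, \circ)$; then $a \circ s = t \circ a = t + a$ and also $a \circ s = a + \lambda_a(s)$, so $\lambda_a(s) = t \in Soc(A)$ by commutativity of $+$. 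These observations show that the fibers of $\pi$ are exactly the additive cosets of $Soc(A)$, promoting $\bar\pi$ to a brace isomorphism.

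The main obstacle, as often happens in brace theory, is not the group-theoretic part but rather matching the canonical brace structure on $\mathcal{G}(A)$ from Bachiller's construction with the one naively transported from $A/Soc(A)$ along $\bar\pi$; once this compatibility of additive structures is in hand, the conclusion follows immediately.
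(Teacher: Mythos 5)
The paper itself offers no proof of this proposition: it is quoted verbatim from \cite[Lemma 2.2]{cedo2021constructing}, so your argument can only be measured against the standard one. The group-theoretic skeleton of your proposal is correct and complete. Since $\lambda$ is a homomorphism on $(A,\circ)$ by \cref{action}, its image $\{\lambda_a : a\in A\}$ is already a subgroup containing all the $\sigma_a=\lambda_a^{-1}$, hence equals $\mathcal{G}(A)$; the kernel is $Soc(A)$ by definition; and your verification that the multiplicative and additive cosets of $Soc(A)$ coincide (via $\lambda_a(Soc(A))\subseteq Soc(A)$, proved exactly as one proves that the socle is an ideal) correctly shows that $\lambda_a+\lambda_b:=\lambda_{a+b}$ is a well-defined abelian operation for which $\bar\pi$ is additive.

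The genuine gap is the one you name yourself in your final paragraph and then do not close: you never check that this transported addition coincides with the addition that $\mathcal{G}(A)$ actually carries as the associated left brace of \cref{esbrace}. You present ``$\lambda_a+\lambda_b:=\lambda_{a+b}$'' as if it were the definition of the canonical addition, but it is not; it is precisely the statement to be proved. Without it you have only exhibited an isomorphism of $A/Soc(A)$ with $(\mathcal{G}(A),+',\circ)$ for some brace structure $+'$ of your own making, whereas the proposition (and its later uses, e.g.\ in \cref{prelcar}) concerns the canonical structure. The missing step is short but must be written: in the left brace structure that $\mathcal{G}(X)$ inherits from the structure group, the $\lambda$-map of $\mathcal{G}(A)$ acts on the generators by the natural action on $X=A$, that is, $\lambda^{\mathcal{G}(A)}_{\lambda_a}(\lambda_b)=\lambda_{\lambda_a(b)}$. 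Since in any left brace $u+v=u\circ\bigl(\lambda^{\mathcal{G}(A)}_u\bigr)^{-1}(v)$, one gets
\begin{equation*}
\lambda_a+\lambda_b=\lambda_a\circ\lambda_{\lambda_a^{-1}(b)}=\lambda_{a\circ\lambda_a^{-1}(b)}=\lambda_{a+\lambda_a(\lambda_a^{-1}(b))}=\lambda_{a+b},
\end{equation*}
which is exactly the compatibility you postponed. Adding these two lines turns your proposal into a complete proof.
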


If $A$ is a left brace, by \cref{action}, the maps $\lambda_a$ determine an action of $(A,\circ)$ on $(A,+)$. According to 
\cite{rump2007braces, rump2020}, a subset $X$ of $A$ which is a union of orbits with respect to such an action
and generating the additive group $(A,+)$ is called \textit{cycle base}. Moreover, if a cycle base is a single orbit then is said to be a \textit{transitive cycle base}. The following result, that is useful for our scopes, is implicitly contained in \cite{cedo2021constructing}.

\begin{prop}[Section 4, \cite{cedo2021constructing}]\label{injtrans}
Let $X$ be an indecomposable and irretractable cycle set and $\mathcal{G}(X)$ be the left brace on the associated permutation group. Then, $X$ can be regarded as a transitive cycle base of $\mathcal{G}(X)$.
\end{prop}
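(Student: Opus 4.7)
The strategy is to exhibit $X$ inside $\mathcal{G}(X)$ via the natural map $\iota(x):=\sigma_x$ and then to verify, in turn, the two defining properties of a transitive cycle base: being a single $\lambda$-orbit and generating the additive group.

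First, I would observe that $\iota$ is injective. If $\sigma_x=\sigma_y$ then $x\sim_\sigma y$, and since $X$ is irretractable the congruence $\sim_\sigma$ is trivial, so $x=y$. Thus one may identify $X$ with $\iota(X)\subseteq \mathcal{G}(X)$.

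The central technical step is to translate the natural permutation action of $\mathcal{G}(X)$ on $X$ into the brace $\lambda$-action on $\iota(X)$. From the construction of the left brace $(\mathcal{G}(X),+,\circ)$ recalled in \cref{esbrace} (and detailed in \cite{bachiller2015family}), one obtains the identity
\[
\lambda_{\sigma_x}(\sigma_y)=\sigma_{\sigma_x(y)},
\]
valid for all $x,y\in X$. Since by \cref{action} the map $g\mapsto \lambda_g$ is a group homomorphism $(\mathcal{G}(X),\circ)\to\Aut(\mathcal{G}(X),+)$, this extends inductively to
\[
\lambda_g(\sigma_y)=\sigma_{g(y)}
\]
for every $g\in\mathcal{G}(X)$ and $y\in X$. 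In particular, $\iota(X)$ is stable under every $\lambda_g$.

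Combining this with the hypotheses: indecomposability of $X$ means $\mathcal{G}(X)$ acts transitively on $X$, so for any $x,y\in X$ there exists $g\in\mathcal{G}(X)$ with $g(x)=y$, whence $\lambda_g(\sigma_x)=\sigma_y$. Thus $\iota(X)$ is a single $\lambda$-orbit. By the very definition of $\mathcal{G}(X)$, the set $\iota(X)$ generates it as a group under $\circ$; and for any $\lambda$-invariant subset $S$ of a left brace the identities $a\circ b=a+\lambda_a(b)$ and $a+b=a\circ\lambda_a^{-1}(b)$ force the multiplicative and additive subgroups generated by $S$ to coincide. Applying this with $S=\iota(X)$ shows that $\iota(X)$ also generates $(\mathcal{G}(X),+)$, hence is a transitive cycle base.

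The only non-routine point, and the main obstacle, is the verification of the key formula $\lambda_{\sigma_x}(\sigma_y)=\sigma_{\sigma_x(y)}$, which rests on the explicit description of $(\mathcal{G}(X),+,\circ)$ only sketched in \cref{esbrace}. A conceptual shortcut is available: by \cref{soczero}, irretractability yields $Soc(\mathcal{G}(X))=\{\id_X\}$, and then \cref{isosocle} gives $\mathcal{G}(X)\cong \mathcal{G}(\mathcal{G}(X))$ as left braces, so the desired identification of $X$ with a $\lambda$-orbit in $\mathcal{G}(X)$ can be transferred from the corresponding orbit inside the induced cycle set on the brace.
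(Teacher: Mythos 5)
The paper does not actually prove this proposition---it is imported from \cite{cedo2021constructing}---but your argument correctly reconstructs the standard one: injectivity of $x\mapsto\sigma_x$ from irretractability, $\lambda$-invariance of the image with orbits matching the orbits of the natural $\mathcal{G}(X)$-action on $X$ (hence a single orbit by indecomposability), and additive generation because the additive subgroup generated by a $\lambda$-invariant set is automatically closed under $\circ$ and therefore contains $\langle\sigma_x : x\in X\rangle_\circ=\mathcal{G}(X)$. The one step you take on faith, the identity $\lambda_{\sigma_x}(\sigma_y)=\sigma_{\sigma_x(y)}$, is indeed the defining feature of the brace structure recalled in \cref{esbrace} (its exact form may carry an inverse, $\sigma_{\sigma_x^{-1}(y)}$, depending on the sign convention, but this does not affect invariance or the orbit count), so the proof is sound and follows essentially the same route as the cited source.
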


\noindent We close this preliminary section showing the Galois-correspondence between ideals of a left brace $A$ and congruences of a transitive cycle base $X$ founded by Rump in \cite{rump2006modules,rump2007braces}. This is essentially a mixture between \cite[Theorem 1]{rump2007braces} and \cite[Theorem 1]{rump2006modules}, adapted for our aim (we remark that, in this context, all the cycle sets are finite). At first, recall that if $\sim$ and $\sim'$ are equivalence relations on a set $S$, then $\sim$ is said to be a \emph{refinement} of $\sim'$ if $x\sim y$ implies $x\sim' y$, for all $x,y\in S$.

\begin{theor}[Theorem 1, \cite{rump2007braces} - Theorem 1, \cite{rump2006modules}]\label{prelicong}
Let $A$ be a finite left brace such that $Soc(A)=\{0\}$ and $X$ be a transtive cycle base. Then, if $I$ is an ideal of $A$, the orbits of $X$ under the action of $I$ give rise to a congruence of $X$. Conversely, if $\sim$ is a congruence of $X$, the additive subgroup $I'$ of $A$ generated by $\{x-y\ |\ x,y\in X,\ x\sim y \}$ is an ideal and the orbits of $X$ under the action of $I'$  give rise to a congruence of $X$ which is a refinement of $\sim$.
\end{theor}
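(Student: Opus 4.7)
The plan is to establish the two directions separately, using the quotient of left braces as a bridge between ideals of $A$ and congruences of $X$. For the first direction, starting from an ideal $I$ of $A$, I would form the quotient brace $A/I$ with its induced cycle set structure. Since the projection $\pi\colon A\to A/I$ is a morphism of left braces it is automatically a morphism of the associated cycle sets, and since $X$ is $\lambda$-invariant, hence closed under $a\cdot b=\lambda_a^{-1}(b)$, the restriction $\pi|_X\colon X\to\pi(X)$ is a cycle set epimorphism whose fibres partition $X$ into a congruence. To match these fibres with the $I$-orbits of $X$ I would use the brace identity $a\circ b=a+\lambda_a(b)$, which together with $\lambda$-invariance of $I$ gives $\lambda_a(x)-x\in I$ for every $a\in I$, combined with the transitivity of the $A$-action on $X$.

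For the second direction, starting from a congruence $\sim$ of $X$, I take $I'$ as defined in the statement. Proving that $I'$ is an ideal reduces to three items. First, $I'$ is $\lambda$-invariant: each $\lambda_a\in\Aut(A,+)$ acts on $X$ as a permutation preserving $\sim$, because this action is realised by elements of $\mathcal{G}(X)$, which is generated by the $\sigma_x$, each of which preserves $\sim$ by the congruence axiom; so $\lambda_a$ sends each generator $x-y$ of $I'$ to $\lambda_a(x)-\lambda_a(y)\in I'$. Second, $I'$ is closed under $\circ$ and under $\circ$-inverses, which is immediate from $a\circ b=a+\lambda_a(b)$ and $\lambda$-invariance. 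Third, $I'$ is normal in $(A,\circ)$: I would expand $a\circ c\circ a^{-1}$ using $\lambda_{a\circ c}=\lambda_a\circ\lambda_c$ together with $x\circ y=x+\lambda_x(y)$, and then check that every resulting summand lies in $I'$ by repeated application of $\lambda$-invariance.

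For the refinement claim, I would apply the first direction to the ideal $I'$ just constructed to obtain a congruence on $X$ whose classes are the $I'$-orbits; to show this refines $\sim$ it suffices to verify that $\lambda_a(x)\sim x$ whenever $a\in I'$ and $x\in X$, which I would obtain by factoring the canonical map $X\to X/\!\sim$ through the cycle-set map $X\to A/I'$ via the universal brace of $X/\!\sim$ together with the hypothesis $Soc(A)=\{0\}$. The main technical obstacle I foresee is the verification of normality of $I'$ in $(A,\circ)$: since $\lambda$ is not additive in its first argument, the required brace manipulation must be handled with care. A secondary delicate point is the refinement step, where one must bridge the purely additive description of $I'$ with the multiplicative nature of the $\lambda$-action on $X$.
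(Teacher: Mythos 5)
You should first be aware that the paper offers no proof of this statement to compare against: it is imported, explicitly as ``a mixture adapted for our aim,'' from Theorem 1 of \cite{rump2007braces} and Theorem 1 of \cite{rump2006modules}, so your argument has to stand entirely on its own. As an outline it identifies the right ingredients, but it contains a genuine circularity at its hardest point. You propose to prove normality of $I'$ in $(A,\circ)$ by expanding $a\circ c\circ a^{-1}$ and checking that ``every resulting summand lies in $I'$ by repeated application of $\lambda$-invariance.'' If you actually carry out the expansion, using $a^{-1}=-\lambda_a^{-1}(a)$ and setting $b=\lambda_a^{-1}(a)$ you get $a\circ c\circ a^{-1}=\lambda_a\left(c+b-\lambda_c(b)\right)$; after the legitimate outer use of $\lambda$-invariance you are left needing $b-\lambda_c(b)\in I'$ for all $b\in A$ and $c\in I'$, which (reducing to additive generators) means $x-\lambda_c(x)\in I'$ for all $x\in X$. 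This is \emph{not} a consequence of $\lambda$-invariance of $I'$; it is essentially the refinement claim, namely that $\lambda_c$ fixes every $\sim$-class. But in your plan the refinement claim is obtained only afterwards, by ``applying the first direction to the ideal $I'$ just constructed,'' which presupposes that $I'$ is already an ideal. Normality and refinement thus each wait on the other. The standard way to break the loop is to prove first, independently, that $\lambda_c(x)\sim x$ for all $c\in I'$ and $x\in X$: the cycle-set epimorphism $X\to X/\sim$ induces a left-brace epimorphism $A\cong\mathcal{G}(X)\to\mathcal{G}(X/\sim)$ (this is where $Soc(A)=\{0\}$ and \cref{prelcar} enter), whose kernel is an ideal containing every generator $u-v$ of $I'$ and whose elements act trivially on the classes; both normality of $I'$ and the refinement then follow.

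There is a secondary gap in the first direction. Your quotient argument shows that the fibres of $\pi|_X\colon X\to A/I$ form a congruence, whereas the theorem asserts this for the $\lambda(I)$-orbits. The relation $\lambda_a(x)-x\in I$ only gives orbit $\subseteq$ fibre, and transitivity of $A$ on $X$ does not by itself yield the reverse inclusion, so you are verifying the congruence axiom for a possibly strictly coarser partition. It is safer (and no harder) to check the orbit partition directly: from $\lambda_a\lambda_x=\lambda_{a+\lambda_a(x)}=\lambda_{\lambda_a(x)}\lambda_d$ with $d=\lambda_{\lambda_a(x)}^{-1}(a)\in I$ one gets $\lambda_{\lambda_a(x)}^{-1}=\lambda_d\lambda_x^{-1}\lambda_a^{-1}$, and then for $y=\lambda_a(x)$, $y'=\lambda_b(x')$ with $a,b\in I$ one computes $\lambda_y^{-1}(y')=\lambda_g\left(\lambda_x^{-1}(x')\right)$ for some $g\in I$, using exactly that $I$ is $\lambda$-invariant, a subgroup of $(A,\circ)$, and normal in $(A,\circ)$.
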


\section{Simple solutions and left braces}

This section is devoted to provide a characterization of finite simple solutions. We start with a couple of lemma.

\begin{lemma}\label{prelcar}
Let $B$ be a finite left brace, with $|B|>1$, such that $Soc(B)=\{0\}$ and having a transitive cycle base $X$. Then, the left brace $\mathcal{G}(X)$ is isomorphic to $B$.
\end{lemma}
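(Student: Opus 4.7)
The natural approach is to construct the isomorphism explicitly as the restriction of the $\lambda$-action: define $\phi\colon B\to \Sym(X)$ by $\phi(b):=\lambda_b|_X$. This is well-defined since $X$ is a union of $\lambda$-orbits, and it is a group homomorphism with respect to $\circ$ by \cref{action}. If $\phi(b)=\id_X$, then the additive automorphism $\lambda_b$ fixes the additive generating set $X$, so $\lambda_b=\id_B$ and consequently $b\in Soc(B)=\{0\}$. Hence $\phi$ is injective.

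The heart of the proof is then to show that $\phi(B)=\mathcal{G}(X)$. The inclusion $\mathcal{G}(X)\subseteq \phi(B)$ is clear, since $\sigma_x = \lambda_x^{-1}|_X = \phi(x)^{-1}$. For the converse I would prove that $X$ multiplicatively generates $B$, i.e.\ that $B_0:=\langle X\rangle_\circ$ equals $B$. The plan is to verify that $B_0$ is $\lambda$-invariant under the whole of $B$: for $x\in X$ this is the cycle-base hypothesis; for the $\circ$-inverse $x^{-1}$, one uses the identity $\lambda_a(g_1\circ g_2)=\lambda_a(g_1)\circ \lambda_{a'}(g_2)$ with $a':=\lambda_a(g_1)^{-1}\circ a\circ g_1$ (derivable from the brace axioms), applied to $g_1=x^{-1}$ and $g_2=x$, which forces $\lambda_a(x^{-1})=\lambda_{a'}(x)^{-1}\in X^{-1}\subseteq B_0$; an induction on $\circ$-word length covers the general case. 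Therefore $B_0$ is a left ideal of $B$, and the identity $a+b=a\circ \lambda_a^{-1}(b)$ shows that $B_0$ is also closed under $+$. Since $B_0$ contains $X$, which additively generates $B$, we conclude $B_0=B$ and hence $\phi(B)=\mathcal{G}(X)$.

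Having obtained a group isomorphism $\phi\colon B\to \mathcal{G}(X)$ with respect to $\circ$, it remains to verify that $\phi$ preserves the additive brace structure. This follows by combining \cref{isosocle} applied to $B$ — which yields a brace isomorphism $B\cong \mathcal{G}(B)$, so that $\phi$ is essentially the restriction-of-permutations map $\mathcal{G}(B)\to \mathcal{G}(X)$ — with the construction of the associated left brace recalled in \cref{esbrace}, under which this restriction is a left brace morphism.

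The main obstacle is the middle step — proving $\langle X\rangle_\circ=B$. The delicate point is the treatment of the $\circ$-inverses: one needs the explicit formula for $\lambda_a$ applied to a $\circ$-product in order to reduce the question back to the $\lambda$-invariance of $X$ itself, after which the standard fact that a left ideal is an additive subgroup closes the argument.
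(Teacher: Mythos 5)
Your proof is correct and follows essentially the same route as the paper's: both identify $B$ with $\mathcal{G}(B)\cong B/Soc(B)$ via \cref{isosocle} and match it with $\mathcal{G}(X)$ through restriction of the $\lambda$-action to the transitive cycle base $X$. The only real difference is that where the paper cites the first part of the proof of Theorem 5.1 of Ced\'o--Okni\'nski for the surjectivity step, you supply a self-contained argument that $X$ generates $(B,\circ)$, and your left-ideal computation (including the identity $\lambda_a(g_1\circ g_2)=\lambda_a(g_1)\circ\lambda_{a'}(g_2)$ and the passage from left ideal to additive subgroup) checks out.
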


\begin{proof}
Clearly, the permutation $\sigma_x$ in $\mathcal{G}(X)$ is equal to the element $\lambda_x^{-1}$ in $\mathcal{G}(B)$ restricted to the set $X$.
Since the natural embedding from $X$ to $B$ induces the left braces homomorphism $f$ from $\mathcal{G}(X)$ to $\mathcal{G}(B)$ given by $f(\sigma_x^{-1}):=\lambda_x$ for all $x\in X$ (see \cite[Section 2]{cedo2022new} for more details), by \cref{isosocle} it is sufficient to show that $f$ is bijective. As done in the first part of the proof of \cite[Theorem 5.1]{cedo2022new} (where $f$ is denoted by $\bar{i}$), one can show that $f$ is surjective. Now, if $g\in \mathcal{G}(X)$ is such that $g\in Ker(f)$, clearly $g(x)=x$ for all $x\in X$, hence $f$ is injective and the thesis follows.
\end{proof}

\begin{lemma}(Proposition 4.3, \cite{cedo2021constructing})\label{preid}
Let $X$ be a simple cycle set such that $|X|$ has not prime size. Then, the left brace $\mathcal{G}(X)$ has a unique minimal ideal $I$ equal to the additive subgroup generated by $\{\sigma_x-\sigma_y\ | \ x,y\in X \}$. Moreover, $\mathcal{G}(X)/I$ is a trivial cyclic left brace.
\end{lemma}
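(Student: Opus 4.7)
The plan is to apply \cref{prelicong} to the left brace $A=\mathcal{G}(X)$ and then exploit simplicity of $X$ to pin down the ideal lattice of $A$. First I would verify the hypotheses of that theorem: since $|X|$ is not prime, \cref{indirr} yields that $X$ is indecomposable and irretractable, so \cref{soczero} gives $Soc(\mathcal{G}(X))=\{\id_X\}$, and \cref{injtrans} lets us view $X$ as a transitive cycle base of $\mathcal{G}(X)$. Simplicity of $X$ then translates into the statement that $X$ admits exactly two congruences, the diagonal one and the maximal (single-class) one.

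Applying the congruence-to-ideal direction of \cref{prelicong} to the maximal congruence immediately produces the candidate ideal $I$, namely the additive subgroup generated by the differences $\sigma_x-\sigma_y$; it is nonzero because irretractability guarantees that distinct elements of $X$ yield distinct permutations $\sigma_x$.

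The heart of the argument is minimality. I would pick an arbitrary nonzero ideal $J$ and consider the congruence $\sim_J$ that \cref{prelicong} associates to $J$ via its $\lambda$-orbits on $X$. By simplicity, $\sim_J$ is either diagonal or maximal. In the diagonal case every $j\in J$ fixes $X$ pointwise under $\lambda_j$, and since $X$ additively generates $\mathcal{G}(X)$ and $\lambda_j$ is additive, this forces $\lambda_j=\id$, hence $j\in Soc(\mathcal{G}(X))=\{0\}$, contradicting $J\neq\{0\}$. So $\sim_J$ is maximal, and for each $x,y\in X$ there exists $j\in J$ with $\lambda_j(x)=y$. The key calculation exploits both ideal closure properties simultaneously: setting $j':=x^{-1}\circ j\circ x\in J$ (by $\circ$-normality of $J$) and using the brace identity $a\circ b=a+\lambda_a(b)$, one obtains
\begin{equation*}
    y-x \;=\; \lambda_j(x)-x \;=\; j\circ x - j - x \;=\; x\circ j' - j - x \;=\; \lambda_x(j')-j,
\end{equation*}
which lies in $J$ because $\lambda_x(j')\in\lambda_x(J)\subseteq J$. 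Hence every additive generator of $I$ belongs to $J$, so $I\subseteq J$, establishing that $I$ is the unique minimal nonzero ideal.

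For the ``moreover'' clause, the cycle base property $\lambda_a(X)\subseteq X$ implies $\lambda_a(x)-x\in I$ for every $x\in X$ and $a\in\mathcal{G}(X)$; additivity of $\lambda_a$ together with the fact that $X$ generates $\mathcal{G}(X)$ additively then propagates this to $\lambda_a(b)-b\in I$ for all $b$, so each $\lambda_a$ induces the identity on $\mathcal{G}(X)/I$, making the quotient a trivial brace. Multiplicatively, $\mathcal{G}(X)$ is generated by $\{\sigma_x\mid x\in X\}$, all of whose images coincide modulo $I$, so $\mathcal{G}(X)/I$ is cyclic. The step I expect to require the most care is the brace manipulation in the minimality argument: it is short, but genuinely needs both $\circ$-normality and $\lambda$-invariance of $J$ in concert with the brace identity to realise $y-x$ as a sum of two elements of $J$.
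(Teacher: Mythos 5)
Your argument is correct, but there is nothing in the paper to compare it against: Lemma~\ref{preid} is stated as a citation of Proposition~4.3 of \cite{cedo2021constructing} and the paper gives no proof of its own. Your derivation is a legitimate, self-contained route using only the paper's stated toolkit, and it is very much in the spirit of how the paper itself argues later: you verify the hypotheses of \cref{prelicong} via \cref{indirr}, \cref{soczero} and \cref{injtrans}, use the congruence-to-ideal direction on the maximal congruence to produce $I$, and then show $I\subseteq J$ for every nonzero ideal $J$ by first ruling out the diagonal congruence (which would force $J\subseteq Soc(\mathcal{G}(X))=\{0\}$) and then writing $\sigma_y-\sigma_x=\lambda_x(j')-j$ with $j'=x^{-1}\circ j\circ x$; this computation is correct and does genuinely need both $\circ$-normality and $\lambda$-invariance of $J$, together with the fact (worth stating explicitly) that a left ideal is automatically an additive subgroup, so that the sum of the two elements of $J$ stays in $J$. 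The ``moreover'' clause is also handled correctly: $\lambda_a$-invariance of the cycle base gives $\lambda_a(b)-b\in I$ for all $b$ by additive generation, so the quotient is a trivial brace, and since all $\sigma_x$ coincide modulo $I$ it is cyclic. This is essentially the same reduction to Rump's ideal--congruence correspondence that the paper deploys in the proof of \cref{bracestransi}, so your proof fits the paper's framework; the only cosmetic caveat is to fix once and for all the identification of $X$ with a subset of $\mathcal{G}(X)$ (via $x\mapsto\sigma_x$, consistently with the statement of the lemma) before manipulating differences like $\sigma_x-\sigma_y$.
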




\noindent Now, we are able to show the main result of the section.

\begin{theor}\label{bracestransi}
Let $B$ be a finite left brace, with $|B|>1$, such that $Soc(B)=\{0\}$ and having a transitive cycle base $X$. Then, the following conditions are equivalent:
\begin{itemize}
    \item[1)] $X$ is a simple cycle set;
    \item[2)] every non-trivial ideal of $B$ acts transitively on $X$;
    \item[3)] $B$ has a unique minimal ideal $I$ that acts transitively on $X$.
\end{itemize}
\end{theor}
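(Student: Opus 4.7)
The plan is to exploit the Galois correspondence of \cref{prelicong} between ideals of $B$ and congruences of $X$, coupled with the rigidity forced by $Soc(B)=\{0\}$: since $X$ generates $(B,+)$ additively and each $\lambda_b$ is an additive automorphism, any $b$ whose $\lambda$-action fixes $X$ pointwise must lie in the socle, hence be $0$. This lets me match ``acting transitively on $X$'' with ``inducing the universal congruence'', and ``simplicity of $X$'' with ``every non-trivial congruence is the universal one''.

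For $(1)\Rightarrow(2)$, given a non-trivial ideal $J$ I examine the orbit congruence $\sim_J$ on $X$. Simplicity forces $\sim_J$ to be equality or universal; equality would mean every $\lambda_j$ with $j\in J$ fixes $X$ pointwise, so $j\in Soc(B)=\{0\}$ by the remark above, a contradiction. Therefore $\sim_J$ is universal and $J$ acts transitively. For $(2)\Rightarrow(1)$, given a congruence $\sim$ on $X$ other than equality, I form the ideal $I_\sim$ supplied by \cref{prelicong}, which is non-trivial because it contains $x-y\neq 0$ for some $\sim$-related pair $x\neq y$; by $(2)$, $I_\sim$ acts transitively, so $\sim_{I_\sim}$ is universal, and since $\sim_{I_\sim}$ refines $\sim$, $\sim$ itself is universal, proving simplicity.

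The implication $(3)\Rightarrow(2)$ is immediate since in the finite lattice of ideals a unique minimal ideal is contained in every non-trivial one, so transitivity of its action lifts to every non-trivial ideal. For $(2)\Rightarrow(3)$ I introduce $I^{*}$, the additive subgroup of $B$ generated by $\{x-y:x,y\in X\}$; by \cref{prelicong} applied to the universal congruence this is an ideal, and it is non-trivial because $|X|\geq 2$ (a single-element orbit would, together with $Soc(B)=\{0\}$, force $B=\{0\}$). The key step is to show $I^{*}\subseteq J$ for every non-trivial ideal $J$: by $(2)$, for any $x,y\in X$ I pick $j\in J$ with $y=\lambda_j(x)=-j+j\circ x$; passing to $B/J$ and using both the normality of $J$ in $(B,\circ)$ and the coincidence $b+J=b\circ J$ of additive and multiplicative cosets modulo an ideal (which holds because $b+j=b\circ \lambda_b^{-1}(j)$ with $\lambda_b^{-1}(J)=J$), I obtain $\bar{y}=\overline{j\circ x}=\bar{x}$, so $y-x\in J$. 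Hence $I^{*}$ sits inside every non-trivial ideal, which makes it the unique minimal one, and it acts transitively by $(2)$.

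The main technical obstacle I anticipate is this last inclusion $I^{*}\subseteq J$; everything else is essentially bookkeeping with the Galois correspondence. The identification of additive and multiplicative cosets of an ideal is the crucial small lemma that lets me translate the transitivity of the $\lambda$-action of $J$ on $X$ into the concrete statement that differences of $X$-elements land in $J$.
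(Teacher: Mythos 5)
Your proof is correct. The equivalence $(1)\Leftrightarrow(2)$ and the easy implication $(3)\Rightarrow(2)$ are argued exactly as in the paper, via the Galois correspondence of \cref{prelicong} together with the observation that an ideal acting trivially on the additive generating set $X$ must lie in $Soc(B)=\{0\}$. Where you genuinely diverge is in producing the unique minimal ideal: the paper first invokes \cref{prelcar} to identify $B$ with $\mathcal{G}(X)$, rules out prime size, and then cites \cref{preid} (Ced\'o--Okni\'nski's Proposition 4.3) to obtain a unique minimal ideal, whereas you construct it directly as the additive subgroup $I^{*}$ generated by $\{x-y : x,y\in X\}$ and prove by hand that $I^{*}\subseteq J$ for every non-trivial ideal $J$, using transitivity of the $\lambda$-action of $J$ on $X$ and the coincidence of additive and multiplicative cosets of an ideal (your identity $b+j=b\circ\lambda_b^{-1}(j)$, plus normality, correctly yields $\lambda_j(x)-x\in J$). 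Your route is more self-contained --- it avoids the detour through $B\cong\mathcal{G}(X)$ and the external lemma, and it exhibits the minimal ideal explicitly as the ideal attached to the universal congruence --- at the cost of the small coset computation; the paper's route is shorter on the page but leans on two imported results and incidentally records the extra information that $\mathcal{G}(X)/I$ is trivial cyclic, which your argument does not reproduce (nor does the theorem require it).
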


\begin{proof}
If $X$ is simple and $B$ has a non-trivial ideal $I$ that does not acts transitively on $X$, then by \cref{prelicong} induces a congruence $\sim_I$ on $X$. Therefore, $x\sim_I y$ if and only if $x=y$ and, since $X$ is a transitive cycle base, it follows that $I\subseteq Soc(B)$, a contradiction. Hence we showed that $1)$ implies $2)$.\\
Conversely, suppose that $X$ is not simple. By hypothesis, one can easily show that $|X|>1$.  Therefore, there exist a non-trivial congruence $\sim_P$ (here, non-trivial means that $\sim_P$ does not induces a trivial partition of $X$). By \cref{prelicong}, the additive subgroup $I$ of $B$ generated by $\{x-y\in X\ \mid \ x\sim_P y \}$ is an ideal such that $\sim_I$ is a refinement of $\sim_P$. Moreover, since $\sim_P$ is not a trivial congruence, $I$ can not be the trivial ideal. Since $I$ acts transitively on $X$, $x\sim_I y$ for all $x,y\in X$ and hence $x \sim_P y$ for all $x,y\in X$, a contradiction, therefore $2)$ implies $1)$.\\ 
It remains to show that $1)$ is equivalent to $3)$. Suppose $X$ simple. By \cref{prelcar} we have that $B\cong \mathcal{G}(X)$, hence $|X|$ can not be a prime number and by \cref{preid} $B$ has a unique minimal ideal $I$. Since we showed that $1)$ is equivalent to $2)$, it follows that $I$ acts transitively on $X$. Finally, if $B$ has a unique minimal ideal $I$ that acts transitively on $X$, it follows that every non-trivial ideal $J$ of $B$ acts transitively on $X$, hence $3)\Rightarrow 2)$, which implies $3)\Rightarrow 1)$. 
\end{proof}

\noindent By the previous result, we have that \cite[Theorem 5.1]{cedo2022new} follows as a corollary.

\begin{cor}\label{corcedo}\cite[Theorem 5.1]{cedo2022new}
Let $B$ be a simple non-trivial left brace having a transitive cycle base $X$. Then, $X$ is a simple cycle set.
\end{cor}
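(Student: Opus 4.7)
The plan is to derive the corollary directly from the equivalence $(1) \Leftrightarrow (3)$ in \cref{bracestransi}. The only hypothesis of that theorem not immediately given is $Soc(B)=\{0\}$, so my first step is to establish it. Since $Soc(B)$ is an ideal of the simple brace $B$, either $Soc(B)=\{0\}$ or $Soc(B)=B$. In the latter case every $\lambda_a$ is the identity on $B$, so each orbit under the $\lambda$-action of $B$ reduces to a single point; the transitive cycle base $X$ would then itself be a singleton, which contradicts the conclusion we are trying to prove (a simple cycle set must satisfy $|X|>1$). Hence $Soc(B)=\{0\}$ and \cref{bracestransi} applies.

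Next, since $B$ is simple, its only ideals are $\{0\}$ and $B$, so $B$ itself is the unique nonzero ideal and, trivially, the unique minimal nonzero ideal. The ideal $B$ acts transitively on $X$ by hypothesis, since $X$ is a single orbit of the $\lambda$-action of $B$. Therefore condition $(3)$ of \cref{bracestransi} is satisfied with $I=B$, and by the equivalence $(1) \Leftrightarrow (3)$ we conclude that $X$ is a simple cycle set.

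I do not anticipate a real obstacle: the corollary is essentially a direct specialisation of the theorem, once one performs the minor reduction to $Soc(B)=\{0\}$. The only mild point of care is the degenerate case $Soc(B)=B$, which corresponds to an additively cyclic brace of prime order and forces $|X|=1$; this is excluded by the very definition of a simple cycle set, so it causes no genuine trouble.
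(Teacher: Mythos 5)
Your overall strategy is exactly the paper's: since $B$ is simple, its only nonzero ideal is $B$ itself, which acts transitively on $X$ because a transitive cycle base is by definition a single orbit, and then \cref{bracestransi} gives the result (the paper invokes condition $2)$, you invoke $3)$; the difference is immaterial). However, your reduction to $Soc(B)=\{0\}$ contains a circularity. You rule out the case $Soc(B)=B$ by saying it would force $|X|=1$, ``which contradicts the conclusion we are trying to prove.'' You cannot refute a case by observing that the conclusion would fail in it --- if the hypotheses permitted $Soc(B)=B$, the corollary would simply be false (indeed, the trivial brace $\mathbb{Z}/p\mathbb{Z}$ is simple and has a singleton transitive cycle base, which is not a simple cycle set). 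The case must instead be excluded by the hypotheses: ``non-trivial'' here means that $B$ is not a trivial brace, i.e.\ $a\circ b\neq a+b$ for some $a,b$, which is precisely the statement $Soc(B)\neq B$; combined with simplicity and the fact that $Soc(B)$ is an ideal, this yields $Soc(B)=\{0\}$. With that one-line repair your argument is complete and coincides with the paper's.
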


\begin{proof}
Since $B$ is simple, the unique ideal different from $\{0\}$ is $B$, that acts transitively on $X$. Then, the thesis follow by \cref{bracestransi}.
\end{proof}

\noindent Before giving the announced characterization, which closes the section, we recall that in \cite[Theorem 2.13]{etingof1998set} Etingof, Schedler and Soloviev proved that there is, up to isomorphisms, a unique indecomposable cycle set of size $p$ for every prime number $p$.

\begin{cor}
Let $X$ be a cycle set. If $X$ has size $2$, then $X$ is simple. If $|X|>2$, the following conditions are equivalent:
\begin{itemize}
    \item[1)] $X$ is a simple cycle set;
    \item[2)] $X$ is the unique indecomposable cycle set of size $p$ (for some prime number $p$) or $X$ is an irretractable and indecomposable cycle set such that every non-trivial ideal of $\mathcal{G}(X)$ acts transitively on $X$;
    \item[3)] $X$ is the unique indecomposable cycle set of size $p$ (for some prime number $p$) or $X$ is an irretractable and indecomposable cycle set such that $\mathcal{G}(X)$ has a unique minimal ideal $I$ that acts transitively on $X$.
\end{itemize}
\end{cor}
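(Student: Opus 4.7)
The plan is to reduce the corollary directly to \cref{bracestransi}, handling prime-size cycle sets separately because the unique indecomposable solution of prime size $p$ is retractable and therefore falls outside the hypotheses of that theorem.

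First I would dispatch the size-$2$ case. Any epimorphism $f\colon X\to Y$ from a two-element cycle set onto a cycle set with $|Y|>1$ is forced to be bijective, hence an isomorphism, so $X$ is simple by definition. From now on assume $|X|>2$, and split the equivalence into the cyclic implications $1)\Rightarrow 2)\Rightarrow 3)\Rightarrow 1)$.

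For $1)\Rightarrow 2)$, suppose $X$ is simple. By \cref{indirr} the cycle set $X$ is indecomposable; if $|X|$ is a prime $p$, the Etingof-Schedler-Soloviev uniqueness statement recalled just before the corollary places $X$ in the first disjunct of $2)$. Otherwise \cref{indirr} also yields irretractability, so by \cref{soczero} we get $Soc(\mathcal{G}(X))=\{\id_X\}$, and by \cref{injtrans} we may regard $X$ as a transitive cycle base of $\mathcal{G}(X)$. The hypotheses of \cref{bracestransi} applied with $B=\mathcal{G}(X)$ are then met, and the implication $1)\Rightarrow 2)$ there delivers transitivity of every non-trivial ideal of $\mathcal{G}(X)$ on $X$, putting $X$ in the second disjunct of $2)$.

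For $2)\Rightarrow 3)$ and $3)\Rightarrow 1)$, the prime-size disjuncts of $2)$ and $3)$ are identical, and the cycle set they describe is simple because an indecomposable cycle set of prime cardinality cannot admit a non-trivial congruence (its blocks would have size dividing $p$). For the remaining disjuncts, the cycle set $X$ is assumed irretractable and indecomposable, so \cref{soczero} and \cref{injtrans} once again secure the hypotheses of \cref{bracestransi} for $B=\mathcal{G}(X)$; the corresponding implications in that theorem then transfer verbatim, giving first the unique-minimal-ideal statement and then the simplicity of $X$. The only genuinely delicate point is the isolation of the prime-size case: since the canonical indecomposable solution of prime size is retractable, its simplicity cannot be read off from \cref{bracestransi} and must instead be deduced from the Etingof-Schedler-Soloviev classification together with the elementary block-size argument above.
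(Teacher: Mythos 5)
Your proposal is correct and follows essentially the same route as the paper: dispatch $|X|=2$ directly, reduce the non-prime case to \cref{bracestransi} with $B=\mathcal{G}(X)$ via \cref{indirr}, \cref{soczero} and \cref{injtrans}, and isolate the prime-size case using the Etingof--Schedler--Soloviev uniqueness. The only cosmetic difference is that you justify simplicity of the prime-size indecomposable cycle set by a direct block-size argument (congruence classes are permuted transitively, so their common size divides $p$), where the paper instead cites \cite[Lemma 1]{cacsp2018}.
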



\begin{proof}
The first part trivially follows. Suppose that $X$ is a simple cycle set with $|X|>2$. Then, by \cref{indirr} it is indecomposable. If $X$ has not prime size, again by \cref{indirr} it also is irretractable. Therefore, by \cref{soczero}, \cref{injtrans} and \cref{bracestransi} (applied with $B:=\mathcal{G}(X)$) we obtain $1)\Rightarrow 2)$ and $1)\Rightarrow 3)$. \\
If $X$ is an indecomposable cycle set of size $p$, for some prime number $p$, then it is simple by \cite[Lemma 1]{cacsp2018}. While, if $X$ is an irretractable and indecomposable cycle set such that every non-trivial ideal of $\mathcal{G}(X)$ acts transitively on $X$, again by \cref{soczero}, \cref{injtrans} and \cref{bracestransi} (applied with $B:=\mathcal{G}(X)$) we obtain $2)\Rightarrow 1)$. In a similar way, one can show $3)\Rightarrow 1)$.
\end{proof}

\section{Examples}

In this section, we apply \cref{bracestransi} to some left braces (found using a GAP package  \cite{Ve15pack}) to provide concrete examples of simple cycle sets, some of which are different from those given in \cite{cedo2021constructing,cedo2022new}. 

\smallskip 

In order to start with a small example, we show providing a well-known cycle set of size $4$.

\begin{ex}\label{ex1}
Let $B$ be the left brace $B_{8,27}$ of \cite{Ve15pack}. Then, it is a left brace of size $8$ having only one non-trivial ideal $I$, which has size $4$, different from $B$. Moreover, it has a transitive cycle base $X$ having size $4$. In particular, $X$ is isomorphic to the cycle set given by $C:=\{1,2,3,4\}$ and 
$$
\sigma_1:=( 2,4)
$$
$$
\sigma_2:=( 1,3)
$$
$$
\sigma_3:= ( 1, 2,3,4)
$$
$$
\sigma_4:=( 1,4,3,2).
$$
Since $I$ acts transitively on $X$, the cycle set $C$ is simple.
\end{ex}

In \cite{cedo2022new} a families of simple cycle sets having size $4p_{1}^2$ or $p_1^{2}\cdotp \dots \cdotp p_n^{2}$ , where $p_1,\dots ,p_n$ are prime numbers, and with a simple associated left brace were exhibited. In the following, we give a simple cycle set having a simple associated left brace which is smaller than the ones provided in \cite{cedo2022new}.

\begin{ex}
Let $B$ be the left brace $B_{24,94}$ of \cite{Ve15pack}. Then, it is a simple left brace of size $24$. It has a transitive cycle base $X$ having size $12$. In particular, $X$ is isomorphic to the cycle set given by $C:=\{1,...,12\}$ and 

$$
\sigma_1:=( 1, 8)( 2,10)( 3,11)( 4, 9)( 5, 6)( 7,12)
$$
$$
\sigma_2:= ( 1, 9)( 2, 5, 6,10)( 3,11,12, 7)( 4, 8)
$$
$$
\sigma_3:=  ( 1, 9)( 2,10, 6, 5)( 3, 7,12,11)( 4, 8)
$$
$$
\sigma_4:=( 1, 2,12)( 3, 4, 6)( 5, 9,11)( 7,10, 8)
$$
$$
\sigma_5:=  ( 1,10, 4, 5)( 2,11)( 3, 8,12, 9)( 6, 7)
$$
$$\sigma_6:=( 1, 6,12)( 2, 3, 4)( 5, 9, 7)( 8,11,10)
$$
$$
\sigma_7:=   ( 1,10)( 2, 7)( 3, 9)( 4, 5)( 6,11)( 8,12)
$$
$$
\sigma_8:=( 1, 5, 4,10)( 2,11)( 3, 9,12, 8)( 6, 7)
$$
$$
\sigma_9:= ( 1, 3, 6)( 2, 4,12)( 5,11, 8)( 7, 9,10)
$$
$$
\sigma_{10}:= ( 1,12, 6)( 2, 4, 3)( 5, 7, 9)( 8,10,11)
$$
$$
\sigma_{11}:=   ( 1,12, 2)( 3, 6, 4)( 5,11, 9)( 7, 8,10)
$$
$$
\sigma_{12}:=( 1, 6, 3)( 2,12, 4)( 5, 8,11)( 7,10, 9)
$$
By \cref{corcedo}, $X$ is simple. Moreover, by \cref{prelcar}, $\mathcal{G}(X)$ is a simple left brace.
\end{ex}

\begin{ex}\label{ex2}
Let $B$ be the left brace $B_{48,1532}$ of \cite{Ve15pack}. Then, it is a left brace of size $48$ having only one non-trivial ideal $I$, which has size $24$, different from $B$. Moreover, it has a transitive cycle base $X$ having size $12$. In particular, $X$ is isomorphic to the cycle set given by $C:=\{1,...,12\}$ and 

$$
\sigma_1:=( 1, 8,11, 6)( 2, 5)( 3, 4, 9,10)( 7,12)
$$
$$
\sigma_2:=( 1, 4)( 2, 9,12, 3)( 5, 8, 7, 6)(10,11)
$$
$$
\sigma_3:= ( 1, 6,11, 8)( 2, 5)( 3,10, 9, 4)( 7,12)
$$
$$
\sigma_4:=( 1,10)( 2, 3)( 4,11)( 5, 6)( 7, 8)( 9,12)
$$
$$
\sigma_5:=  ( 1, 6)( 2, 7)( 3, 4)( 5,12)( 8,11)( 9,10)
$$
$$\sigma_6:=( 1, 4)( 2, 3,12, 9)( 5, 6, 7, 8)(10,11)
$$
$$
\sigma_7:=  ( 1, 3, 5,11, 9, 7)( 2,10, 8,12, 4, 6)
$$
$$
\sigma_8:=( 1, 5, 3,11, 7, 9)( 2, 6,10,12, 8, 4)
$$
$$
\sigma_9:= ( 1, 3, 7,11, 9, 5)( 2, 4, 6,12,10, 8)
$$
$$
\sigma_{10}:= ( 1, 7, 9,11, 5, 3)( 2, 6, 4,12, 8,10)
$$
$$
\sigma_{11}:=   ( 1, 9, 7,11, 3, 5)( 2, 4, 8,12,10, 6)
$$
$$
\sigma_{12}:= ( 1, 5, 9,11, 7, 3)( 2, 8,10,12, 6, 4)
$$
Since $I$ acts transitively on $X$, the cycle set $C$ is simple.
\end{ex}

\begin{ex}\label{ex3}
Let $B$ be the left brace $B_{32,24526}$ of \cite{Ve15pack}. Then, it is a left brace of size $32$ having only one non-trivial ideal $I$, which has size $16$, different from $B$. Moreover, it has a transitive cycle base $X$ having size $16$. In particular, $X$ is isomorphic to the cycle set given by $C:=\{1,...,16\}$ and 

$$
\sigma_1:=( 1,13)( 2, 4)( 3,15)( 6,12)( 8,10)(14,16)
$$
$$
\sigma_2:=( 1, 3)( 2,14)( 4,16)( 5,11)( 7, 9)(13,15)
$$
$$
\sigma_3:=( 1,11,13, 7)( 2, 6,14,10)( 3, 9,15, 5)( 4, 8,16,12)
$$
$$
\sigma_4:=( 1, 5,13, 9)( 2,12,14, 8)( 3, 7,15,11)( 4,10,16, 6)
$$
$$
\sigma_5:=   ( 1, 7,13,11)( 2,10,14, 6)( 3, 5,15, 9)( 4,12,16, 8)
$$
$$\sigma_6:= ( 1, 9,13, 5)( 2, 8,14,12)( 3,11,15, 7)( 4, 6,16,10)
$$
$$
\sigma_7:=  ( 2,16)( 4,14)( 5, 9)( 6, 8)( 7,11)(10,12)
$$
$$
\sigma_8:=( 1,15)( 3,13)( 5, 7)( 6,10)( 8,12)( 9,11)
$$
$$
\sigma_9:= ( 1, 8, 3, 6)( 2, 9, 4,11)( 5,16, 7,14)(10,13,12,15)
$$
$$
\sigma_{10}:= ( 1,10, 3,12)( 2, 7, 4, 5)( 6,15, 8,13)( 9,14,11,16)
$$
$$
\sigma_{11}:=   ( 1, 2,15,16)( 3, 4,13,14)( 5,10,11, 8)( 6, 7,12, 9)
$$
$$
\sigma_{12}:= ( 1,16,15, 2)( 3,14,13, 4)( 5, 8,11,10)( 6, 9,12, 7)
$$
$$
\sigma_{13}:= ( 1,14,15, 4)( 2, 3,16,13)( 5, 6,11,12)( 7, 8, 9,10)
$$
$$
\sigma_{14}:=  ( 1, 4,15,14)( 2,13,16, 3)( 5,12,11, 6)( 7,10, 9, 8)
$$
$$
\sigma_{15}:= ( 1,12, 3,10)( 2, 5, 4, 7)( 6,13, 8,15)( 9,16,11,14)
$$
$$
\sigma_{16}:=  ( 1, 6, 3, 8)( 2,11, 4, 9)( 5,14, 7,16)(10,15,12,13)
$$
Since $I$ acts transitively on $X$, the cycle set $C$ is simple.
\end{ex}

\begin{ex}\label{ex4}
Let $B$ be the left brace $B_{81,705}$ of \cite{Ve15pack}. Then, it is a left brace of size $81$ having only one non-trivial ideal $I$, which has size $27$, different from $B$. Moreover, it has a transitive cycle base $X$ having size $27$. In particular, $X$ is isomorphic to the cycle set given by $C:=\{1,...,27\}$ and 
$$
\sigma_{1}:=( 1,17,18)( 2,20,13)( 3,23, 5)( 4,27,16)( 6,14,25)( 7,22,15)( 8,19,24)( 9,12,10)(11,26,21)
$$
$$
 \sigma_{2}:= ( 1, 7,10)( 2,23, 8)( 3,15,11)( 4,24,12)( 5, 6, 9)(13,18,14)(16,22,20)(17,19,21)(25,26,27)
$$
$$
 \sigma_{3}:= ( 1, 8,26)( 2,27, 7)( 3,24,13)( 4,14,11)( 5,22,21)( 6,20,17)( 9,16,19)(10,23,25)(12,18,15)
$$
$$
 \sigma_{4}:= ( 1,26,11,23,10,12,27, 2,13)( 3,17,15, 4, 5,24,18,16,14)( 6, 7,20,22, 8,21,19,25, 9)
$$
$$
 \sigma_{5}:= ( 1,18, 8,23, 3,25,27, 4, 7)( 2, 5,20,26,16,21,10,17, 9)( 6,13,24,22,11,14,19,12,15)
$$
$$
  \sigma_{6}:=( 1, 9,24)( 2, 3,19)( 4, 6,26)( 7, 8,25)(10,18,22)(11,13,12)(14,23,20)(15,27,21)
$$
$$
 \sigma_{7}:= ( 1,11, 6)( 2,15,17)( 3,20,25)( 4,21, 7)( 5,26,24)( 8,18, 9)(10,14,16)(12,22,23)(13,19,27)
$$
$$
 \sigma_{8}:= ( 1, 3, 7,23, 4, 8,27,18,25)( 2,16, 9,26,17,20,10, 5,21)( 6,11,15,22,12,24,19,13,14)
 $$
 $$
 \sigma_{9}:= ( 1,15, 9)( 2,10,26)( 5,13,25)( 6,22,19)( 7,16,11)( 8,17,12)(14,21,27)(20,23,24)
  $$
  $$
 \sigma_{10}:= ( 1,20,15)( 2, 4,22)( 3, 6,10)( 5,16,17)( 7,25, 8)( 9,14,27)(18,19,26)(21,24,23)
  $$
  $$
 \sigma_{11}:= ( 1,12,19)( 2,24,16)( 3,21, 8)( 4, 9,25)( 5,10,15)( 6,23,13)( 7,18,20)(11,22,27)(14,17,26)
  $$
  $$
 \sigma_{12}:= ( 1, 4,25,23,18, 7,27, 3, 8)( 2,17,21,26, 5, 9,10,16,20)( 6,12,14,22,13,15,19,11,24)
  $$
  $$
\sigma_{13}:=  ( 1,23,27)( 2,22,18)( 3,26,19)( 4,10, 6)( 5,25,12)( 7,13,16)( 8,11,17)(14,24,15)
  $$
  $$
\sigma_{14}:=  ( 1,19, 5,23, 6,16,27,22,17)( 2, 8,15,26,25,24,10, 7,14)( 3,13,21, 4,11, 9,18,12,20)
  $$
  $$
\sigma_{15}:=  ( 1,14,20)( 2,26,10)( 3,18, 4)( 5,12, 7)( 8,16,13)( 9,27,24)(11,25,17)(15,21,23)
  $$
  $$
\sigma_{16}:=  ( 1,27,23)( 2,19, 4)( 3,10,22)( 5, 7,11)( 6,18,26)( 8,12,16)( 9,20,21)(13,17,25)
  $$
  $$
\sigma_{17}:=  ( 1,13,22)( 2,14, 5)( 3, 9, 7)( 4,20, 8)( 6,27,12)(10,24,17)(11,19,23)(15,16,26)(18,21,25)
  $$
  $$
\sigma_{18}:=  ( 1,21,14)( 2,18, 6)( 3,22,26)( 4,19,10)( 5,17,16)( 9,15,23)(11,12,13)(20,24,27)
  $$
  $$
\sigma_{19}:=  ( 1,10,13,23, 2,11,27,26,12)( 3, 5,14, 4,16,15,18,17,24)( 6, 8, 9,22,25,20,19, 7,21)
  $$
  $$
 \sigma_{20}:= ( 1,16, 3)( 2, 9,11)( 4,23,17)( 5,18,27)( 6,24, 7)( 8,22,14)(10,21,13)(12,26,20)(15,25,19)
  $$
  $$
 \sigma_{21}:= ( 1, 6,17,23,22, 5,27,19,16)( 2,25,14,26, 7,15,10, 8,24)( 3,11,20, 4,12,21,18,13, 9)
  $$
  $$
\sigma_{22}:=  ( 1,25, 2)( 3,14,12)( 4,15,13)( 5,19,20)( 6,21,16)( 7,26,23)( 8,10,27)( 9,17,22)(11,18,24)
  $$
  $$
 \sigma_{23}:= ( 1,22,16,23,19,17,27, 6, 5)( 2, 7,24,26, 8,14,10,25,15)( 3,12, 9, 4,13,20,18,11,21)
  $$
  $$
 \sigma_{24}:= ( 1, 5, 4)( 2,21,12)( 3,27,17)( 6,15, 8)( 7,19,14)( 9,13,26)(10,20,11)(16,18,23)(22,24,25)
  $$
  $$
\sigma_{25}:=  ( 2, 6, 3)( 4,26,22)( 5, 8,13)( 7,12,17)( 9,21,20)(10,19,18)(11,16,25)(14,15,24)
  $$
  $$
\sigma_{26}:=  ( 1, 2,12,23,26,13,27,10,11)( 3,16,24, 4,17,14,18, 5,15)( 6,25,21,22, 7, 9,19, 8,20)
  $$
  $$
\sigma_{27}:=  ( 1,24,21)( 3, 4,18)( 5,11, 8)( 6,19,22)( 7,17,13)( 9,23,14)(12,25,16)(15,20,27)
$$
Since $I$ acts transitively on $X$, the cycle set $C$ is simple.
\end{ex}

\begin{rem}
We conclude the paper remarking that in \cite{cedo2021constructing,cedo2022new} three types of simple cycle sets were provided: cycle sets that are transitive cycle bases of simple left braces, cycle sets of size $n^2$ and cycle sets having size $n^2 m$ and an element $x$ such that $\sigma_x$ is a $n^2 m$-cycle. Therefore, \cref{ex2} and \cref{ex4} are simple cycle sets that are different from the ones obtained in \cite{cedo2021constructing,cedo2022new}.
\end{rem}

\section*{Acknowledgment}

\noindent The author thank Leandro Vendramin for the informations about his GAP Package and Paola Stefanelli for the discussion about left braces.

\bibliographystyle{elsart-num-sort}
\bibliography{Bibliography}

\def\cprime{$'$}
\begin{thebibliography}{10}
\expandafter\ifx\csname url\endcsname\relax
  \def\url#1{\texttt{#1}}\fi
\expandafter\ifx\csname urlprefix\endcsname\relax\def\urlprefix{URL }\fi

\bibitem{bachiller2015family}
D.~Bachiller, F.~{Ced\'o}, E.~Jespers, J.~{Okni\'nski}, A family of
  irretractable square-free solutions of the {Yang-Baxter} equation, Forum
  Math. 29~(6) (2017) 1291--1306.
\newline\urlprefix\url{https://doi.org/10.1515/forum-2015-0240}

\bibitem{bax72}
R.~J. Baxter, Partition function of the eight-vertex lattice model, Annals of
  Physics 70~(1) (1972) 193--228.

\bibitem{cacsp2018}
M.~Castelli, F.~Catino, G.~Pinto, Indecomposable involutive set-theoretic
  solutions of the {Yang-Baxter} equation, J. Pure Appl. Algebra 220~(10)
  (2019) 4477--4493.
\newline\urlprefix\url{https://doi.org/10.1016/j.jpaa.2019.01.017}

\bibitem{castelli2021indecomposable}
M.~Castelli, F.~Catino, P.~Stefanelli, {Indecomposable Involutive Set-Theoretic
  Solutions of the Yang--Baxter Equation and Orthogonal Dynamical Extensions of
  Cycle Sets}, Mediterr. J. Math. 18~(6) (2021) 1--27.
\newline\urlprefix\url{https://doi.org/10.1007/s00009-021-01912-4}

\bibitem{capiru2020}
M.~Castelli, G.~Pinto, W.~Rump, On the indecomposable involutive set-theoretic
  solutions of the {Y}ang-{B}axter equation of prime-power size, Comm. Algebra
  48~(5) (2020) 1941--1955.
\newline\urlprefix\url{https://doi.org/10.1080/00927872.2019.1710163}

\bibitem{cedo2014braces}
F.~Ced{\'o}, E.~Jespers, J.~Okni{\'n}ski, Braces and the {Yang-Baxter}
  equation, Comm. Math. Phys. 327~(1) (2014) 101--116.
\newline\urlprefix\url{https://doi.org/10.1007/s00220-014-1935-y}

\bibitem{cedo2020primitive}
F.~Ced{\'o}, E.~Jespers, J.~Okninski, Primitive set-theoretic solutions of the
  {Yang-Baxter} equation, Comm. Math.
\newline\urlprefix\url{https://doi.org/10.1142/S0219199721501054}

\bibitem{cedo2021constructing}
F.~Ced{\'o}, J.~Okni{\'n}ski, Constructing finite simple solutions of the
  {Yang-Baxter} equation, Adv. Math. 391 (2021) 107968.
\newline\urlprefix\url{https://doi.org/10.1016/j.aim.2021.107968}

\bibitem{cedo2022new}
F.~Ced{\'o}, J.~Okni{\'n}ski, New simple solutions of the {Yang-Baxter}
  equation and solutions associated to simple left braces, J. Algebra 600
  (2022) 125--151.
\newline\urlprefix\url{https://doi.org/10.1016/j.jalgebra.2022.02.011}

\bibitem{drinfeld1992some}
V.~G. Drinfel\cprime~d, On some unsolved problems in quantum group theory, in:
  Quantum groups ({L}eningrad, 1990), vol. 1510 of Lecture Notes in Math.,
  Springer, Berlin, 1992, pp. 1--8.
\newline\urlprefix\url{https://doi.org/10.1007/BFb0101175}

\bibitem{etingof1998set}
P.~Etingof, T.~Schedler, A.~Soloviev, Set-theoretical solutions to the {Quantum
  Yang-Baxter} equation, Duke Math. J. 100~(2) (1999) 169--209.
\newline\urlprefix\url{http://doi.org/10.1215/S0012-7094-99-10007-X}

\bibitem{gateva1998semigroups}
T.~Gateva-Ivanova, M.~Van~den Bergh, Semigroups of {I-Type}, J. Algebra 206~(1)
  (1998) 97--112.
\newline\urlprefix\url{https://doi.org/10.1006/jabr.1997.7399}

\bibitem{JePiZa20x}
P.~Jedli{\v{c}}ka, A.~Pilitowska, A.~Zamojska-Dzienio, Indecomposable
  involutive solutions of the {Y}ang-{B}axter equation of multipermutational
  level $2$ with abelian permutation group, Forum Math. 2020.
\newline\urlprefix\url{https://doi.org/10.1515/forum-2021-0130}

\bibitem{jedlivcka2021cocyclic}
P.~Jedli{\v{c}}ka, A.~Pilitowska, A.~Zamojska-Dzienio, Cocyclic braces and
  indecomposable cocyclic solutions of the {Yang-Baxter} equation, arXiv
  preprint arXiv:2107.12319.
\newline\urlprefix\url{https://arxiv.org/pdf/2107.12319.pdf}

\bibitem{Okninski2022}
J.~{Okni\'nski}, {"Braces in Bracelet Bay" (LMS Regional Meeting and Workshop -
  Swansea University)}, Talk.
\newline\urlprefix\url{https://sites.google.com/view/lmsmeetingbracesinbraceletbay/home}

\bibitem{rump2005decomposition}
W.~Rump, A decomposition theorem for square-free unitary solutions of the
  quantum {Y}ang-{B}axter equation, Adv. Math. 193 (2005) 40--55.
\newline\urlprefix\url{https://doi.org/10.1016/j.aim.2004.03.019}

\bibitem{rump2006modules}
W.~Rump, Modules over braces, Algebra Discrete Math. -~(2) (2006) 127--137.
\newline\urlprefix\url{http://admjournal.luguniv.edu.ua/index.php/adm/article/view/892/421}

\bibitem{rump2007braces}
W.~Rump, Braces, radical rings, and the quantum {Y}ang-{B}axter equation, J.
  Algebra 307~(1) (2007) 153--170.
\newline\urlprefix\url{https://doi.org/10.1016/j.jalgebra.2006.03.040}

\bibitem{rump2020}
W.~Rump, Classification of indecomposable involutive set-theoretic solutions to
  the {Y}ang-{B}axter equation, Forum Math. 32~(4) (2020) 891--903.
\newline\urlprefix\url{https://doi.org/10.1515/forum-2019-0274}

\bibitem{Ru20}
W.~Rump, Cocyclic solutions to the {Y}ang-{B}axter equation, Proc. Amer. Math.
  Soc. 149~(2) (2021) 471--479.
\newline\urlprefix\url{https://doi.org/10.1090/proc/15220}

\bibitem{smock}
A.~Smoktunowicz, A.~Smoktunowicz, Set-theoretic solutions of the
  {Y}ang-{B}axter equation and new classes of {R}-matrices, Linear Algebra
  Appl. 546 (2018) 86--114.
\newline\urlprefix\url{https://doi.org/10.1016/j.laa.2018.02.001}

\bibitem{vendramin2016extensions}
L.~Vendramin, Extensions of set-theoretic solutions of the {Y}ang-{B}axter
  equation and a conjecture of {G}ateva-{I}vanova, J. Pure Appl. Algebra 220
  (2016) 2064--2076.
\newline\urlprefix\url{https://doi.org/10.1142/S1005386716000183}

\bibitem{Ve15pack}
L.~Vendramin, A.~Konovalov, {C}ombinatorial {S}olutions for the {Y}ang-{B}axter
  equation, {V}ersion 0.9.0 ({G}{A}{P} {p}ackage {Y}ang{B}axter) (2019).
\newline\urlprefix\url{https://gap-packages.github.io/YangBaxter}

\bibitem{yang1967}
C.~N. Yang, {Some Exact Results for the Many-Body Problem in one Dimension with
  Repulsive Delta-Function Interaction}, Phys. Rev. Lett. 19 (1967) 1312--1315.
\newline\urlprefix\url{https://link.aps.org/doi/10.1103/PhysRevLett.19.1312}

\end{thebibliography}

\end{document}